\DeclareMathOperator{\gal}{Gal}
\DeclareMathOperator{\car}{char}
\newtheorem{thm}{Theorem}[section]
\newtheorem{lem}[thm]{Lemma}
\newtheorem{defn}[thm]{Definition}
\begin{document}

\title{ON THE PRE-IMAGE OF A POINT UNDER AN ISOGENY}

\author{JONATHAN REYNOLDS}


\maketitle


\begin{abstract} Given a rational point on a curve in a rational isogeny class, a natural question concerns the field of 
definition of its pre-images. The multiplication by $m$ endomorphism is a powerful and much-used tool. The pre-images for 
this map are found by factorizing a monic polynomial of degree $m^2$. For $m=2$, Everest and King gave examples where the 
existence of a quadratic factor coincided with the existence of a rational pre-image via a $2$-isogeny. Nelson Stephens 
asked if this always happens and the question is answered in the affirmative. It is also shown that the analogue for $m=3$
can only be false when there exists a rational point of order three and a small number of counterexamples are found. The 
results are proven over any field with characteristic not two or three.         
\end{abstract}



\section{Introduction}
Given an elliptic curve $E/ \mathbb{Q}$, the set of all curves $E'$ isogenous to $E$ over $\mathbb{Q}$ is finite (up to 
isomorphism) and is known as an isogeny class. V\'{e}lu's formulae \cite{1} and the Weierstrass parameterization of
the elliptic curve can be used to find an isogeny class. This is best illustrated in an algorithm developed by Cremona 
\cite{2}. He has used his algorithm to produce tables of isogeny classes \cite{3}. For each curve in the class, 
non-torsion generators of the Mordell-Weil group are also given. 
Methods which find isogeny classes over an arbitrary field have also been developed \cite{4}. 
Given a curve in an isogeny class and a point in the Mordell-Weil group, the focus here is on where the pre-images 
of the point are defined.

\begin{defn} 
Let $K$ be a field, $E/K$ an elliptic curve, $P \in E(K)$ and $\phi:E' \to E$ an isogeny. Suppose that $E'$, $\phi$ and
a point in $\phi^{-1}(P)$ are all defined over a finite extension $L/K$ with $[L:K]<\deg \phi$. Then $P$ is called 
\emph{maginfied}. If $L=K$ then $P$ is called \emph{K-magnified}.
\end{defn}

Suppose that $E/\mathbb{Q}$ is an elliptic curve. In \cite{5} Everest, 
Miller and Stephens showed that if a rational non-torsion point is $\mathbb{Q}$-magnified by $\phi:E' \to E$ then the 
corresponding elliptic divisibility sequence has only finitely many prime power terms. In \cite{6} it was shown 
that these finitely many terms correspond to $S$-integral points on $E'$, where $S$ is given explicitly. By the main 
result in \cite{7}, the 
$\mathbb{Q}$-magnified condition can be weakened to magnified when the isogeny is multiplication by two or three. 
Stephens asked if a non-torsion point is $\mathbb{Q}$-magnified whenever it is magnified by doubling a point. The 
following theorem resolves this question.      

\begin{thm} \label{1.2}
Let $E$ be an elliptic curve defined over a field $K$ with $\car{K} \ne 2$. If a non-torsion point $P \in E(K)$ is 
magnified by doubling a point then it is $K$-magnified.  
\end{thm}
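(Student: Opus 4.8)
The plan is to translate the hypothesis into a statement about the factorisation of a single quartic, and then read off a rational pre-image from its Galois structure. Since $\car{K} \ne 2$ we may write $E : y^2 = f(x)$ with $f$ a cubic, and the duplication map $[2]$ has degree $4$. Writing $P = 2Q_0$, the four pre-images of $P$ under $[2]$ form the coset $Q_0 + E[2]$; because $P$ is non-torsion, no two of these pre-images are negatives of one another, so they have four distinct $x$-coordinates, which are the roots of a monic quartic $g_P \in K[x]$. If $P$ is magnified by doubling then some pre-image $Q$ is defined over a field $L$ with $[L:K] \le 3$, whence $[K(x(Q)):K] \le [K(Q):K] \le [L:K] \le 3$ and $g_P$ acquires a factor of degree at most $3$. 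Thus the hypothesis is precisely that $g_P$ is reducible over $K$, and it suffices to prove that reducibility of $g_P$ forces $P$ to be $K$-magnified.

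The key structural step is to describe the Galois action on the four pre-images. Identifying $Q_0 + S$ with $S \in E[2]$, a short computation gives $\sigma(Q_0 + S) \leftrightarrow \sigma(S) + c(\sigma)$, where $c(\sigma) = \sigma(Q_0) - Q_0 \in E[2]$ is a cocycle and $\sigma(S)$ is the usual action on two-torsion. Hence the Galois group of $g_P$ embeds in the affine group of $E[2] \cong (\mathbb{Z}/2)^2$, which is isomorphic to $S_4$, and the orbit sizes are the degrees of the irreducible factors of $g_P$. Two facts will be used: an orbit of size $1$ means that the corresponding pre-image is fixed by $\gal(\bar K/K)$, i.e. lies in $E(K)$, so $P \in 2E(K)$; and a partition of the four pre-images into two pairs is the same data as a non-zero two-torsion point $T$, its pairs being the fibres of the quotient isogeny $E \to E/\langle T\rangle$, these three pairings being permuted by $\gal(\bar K/K)$ exactly as the points of $E[2]$ are.

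With this in hand the argument will finish by a short case division. Since $g_P$ is reducible the Galois group is intransitive, so the orbit sizes form a partition of $4$ with every part at most $3$; such a partition either contains a part of size $1$ or is $2+2$. In the first case $P \in 2E(K)$, so $P$ is $K$-magnified by $[2]$. In the second case the two orbits furnish a Galois-stable pairing of the pre-images; by the correspondence above this pairing comes from a two-torsion point $T_1$ that must then be $K$-rational, yielding a $K$-rational two-isogeny $\phi_1 : E \to E_1 = E/\langle T_1\rangle$ with dual $\hat\phi_1 : E_1 \to E$. Because the fibres of $\phi_1$ on the pre-images are exactly the two orbits, $\phi_1$ collapses each orbit to a single point $R \in E_1$; this $R$ is Galois-fixed, hence lies in $E_1(K)$, and satisfies $\hat\phi_1(R) = [2](Q_0 + S) = P$. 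Thus $P$ is $K$-magnified by the two-isogeny $\hat\phi_1$.

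I expect the main obstacle to be the structural step: proving cleanly that the Galois group of $g_P$ sits inside the affine group of $E[2]$ with linear part the two-torsion representation, and in particular that the $2+2$ case genuinely produces a rational point on $E_1$ rather than merely a rational $x$-coordinate. Once the identification of pairings with rational two-torsion points (equivalently, with the fibres of the quotient two-isogenies) is established, the observation that $\phi_1$ maps a Galois-stable orbit to a single, necessarily rational, point is exactly what converts the quadratic factorisation of $g_P$ into an honest rational pre-image under a two-isogeny.
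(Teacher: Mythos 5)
Your proposal is correct and is essentially the paper's own argument in more structural packaging: your rational two-torsion point $T_1$ extracted from a Galois-stable pair of pre-images is exactly the paper's $T=\sigma(R)-R$, and collapsing that pair through the quotient $2$-isogeny $\phi_1$ and recovering $P$ via the dual $\hat\phi_1$ is precisely the paper's construction of $Q=\phi(R)$ with $\hat\phi(Q)=P$. Your case analysis (an orbit of size $1$, or a $2+2$ orbit partition) corresponds to the paper's reduction, via Lemma \ref{2.1} and the quartic $\delta_2^P$, to a pre-image with $[L:K]\le 2$.
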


It should be noted that factorizing a monic quartic polynomial determines whether a point is magnified by 
doubling a point (see Section \ref{2}). This polynomial depends only on the point and the coefficients of a Weierstrass 
equation for $E$. For multiplication by three the conclusion has to be weakened. 

\begin{thm} \label{1.3}
Let $E$ be an elliptic curve defined over a field $K$ with $\car{K} \ne 2,3$. If a non-torsion point $P \in E(K)$ is 
magnified by tripling a point then either it is $K$-magnified or $E$ has a non-trivial $K$-rational $3$-torsion point.
\end{thm}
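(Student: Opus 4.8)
The plan is to phrase everything in terms of the Galois action on $E[3]$ and the factorization $[3] = \hat\psi \circ \psi$ of multiplication-by-three through a $3$-isogeny and its dual. Since $\car{K} \ne 3$, the group $V := E[3]$ is a two-dimensional $\mathbb{F}_3$-vector space and $[3]$ is separable of degree nine. Fixing any $Q$ with $[3]Q = P$, we have $[3]^{-1}(P) = Q + V$, and since $P \in E(K)$ the group $G = \gal(\bar K/K)$ permutes this set by the affine rule $\sigma(Q+T) = Q + \bigl(\rho(\sigma)T + t(\sigma)\bigr)$, where $\rho : G \to \mathrm{GL}(V) \cong \mathrm{GL}_2(\mathbb{F}_3)$ is the mod-$3$ representation and $t(\sigma) = \sigma Q - Q$ is the attached $1$-cocycle. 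First I would translate the three notions of the statement into this language: the field of definition of a pre-image equals the size of its $G$-orbit, so $P$ is magnified by tripling precisely when some orbit has size $< 9$, i.e. the affine action is not transitive; $E$ has nontrivial $K$-rational $3$-torsion exactly when $\rho$ fixes a nonzero vector; and $P$ is $K$-magnified as soon as \emph{either} the cocycle $t$ is a coboundary (a global fixed point, hence a $K$-rational cube root of $P$, magnifying by $[3]$ itself) \emph{or} there is a $G$-stable line $C \subseteq V$ together with a $G$-stable coset of $C$ in $Q + V$ (a $K$-rational $3$-isogeny $\psi$ with $\ker\psi = C$ and a $K$-point in $\hat\psi^{-1}(P)$).

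With these dictionaries the theorem becomes a purely group-theoretic assertion about the subgroup $\bar H = \rho(G) \le \mathrm{GL}_2(\mathbb{F}_3)$ together with the class of $t$. I would organise the argument around the $\bar H$-submodule $N = \{\,t(\sigma) : \rho(\sigma) = \mathrm{id}\,\} \subseteq V$ of ``pure translations'', which is $0$, a line, or all of $V$; the last case is transitive and excluded by hypothesis. The case where $N$ is a line is the clean one and I would dispatch it first: then $C := N$ is a $G$-stable line, the orbit of $Q$ is a union of cosets of $C$, and non-transitivity forces the induced affine action on the three cosets of $C$ in $Q + V$ to be intransitive as well, hence to fix one coset, which yields a $K$-rational point under $\hat\psi$ and so $K$-magnification. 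If instead $N = 0$ and $\bar H$ is irreducible with $3 \nmid |\bar H|$, then $H^1(\bar H, V) = 0$, so $t$ is a coboundary and $P$ has a $K$-rational cube root.

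The remaining cases are where the real difficulty lies, and they are exactly those with $3 \mid |\bar H|$. When $\bar H$ is reducible I would fix a $G$-stable line $C$ and split off the character $\chi_C$ by which $\bar H$ acts on $C$: if $\chi_C$ is trivial then $\rho$ fixes a nonzero vector and we land in the $3$-torsion alternative, while otherwise I would analyse the induced one-dimensional affine action on $V/C$, whose intransitive possibilities fix a coset and again give $K$-magnification. The hard part, and the source of the exceptions alluded to in the introduction, is the subcase $3 \mid |\bar H|$ with a non-coboundary cocycle: here $\bar H$ contains a unipotent element, $H^1(\bar H, V)$ can be nonzero, and one must show that every such class either produces a $G$-stable fixed coset (for the unique stable line, when $\bar H$ is Borel) or forces $\rho$ to fix a nonzero vector. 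I expect this to require an explicit pass through the conjugacy classes of subgroups of $\mathrm{GL}_2(\mathbb{F}_3)$ of order divisible by $3$ — the Borel subgroups and $\mathrm{SL}_2(\mathbb{F}_3)$, $\mathrm{GL}_2(\mathbb{F}_3)$ — computing the relevant $H^1$ and the resulting partitions of the nine pre-images into orbits. The few configurations that survive with a nonzero class and no fixed coset should be precisely those carrying rational $3$-torsion, which is what makes the ``or'' in the statement unavoidable and accounts for the small list of genuine counterexamples to the naive analogue.
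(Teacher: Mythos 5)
Your affine-action framework is correct, and several pieces are genuinely sound: the dictionary (degree of the field of definition of a pre-image equals its orbit size; a coboundary gives a $K$-rational pre-image; a Galois-stable line $C$ plus a stable coset gives a $K$-point under the dual of the $3$-isogeny with kernel $C$), the case $N=V$ (transitive, excluded), the case $N$ a line, and the case $3\nmid|\bar H|$ via $H^1(\bar H,V)=0$. But the proof stops exactly where the theorem begins: for the decisive case $N=0$, $3\mid|\bar H|$ you offer only a programme (``I expect this to require an explicit pass\dots'', ``should be precisely those carrying rational $3$-torsion''), not an argument. That case is the entire content of the statement, so this is a genuine gap, not a detail. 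Moreover the sketch of the programme is inaccurate in two ways. First, the subgroups of $\mathrm{GL}_2(\mathbb{F}_3)$ of order divisible by $3$ are not only the Borels, $\mathrm{SL}_2(\mathbb{F}_3)$ and $\mathrm{GL}_2(\mathbb{F}_3)$: up to conjugacy you must also treat $C_3$, $C_3\times\{\pm\mathrm{id}\}$ and two classes of $S_3$, and these small groups are precisely where fixed vectors and nonzero $H^1$ live. Second, the difficulty you anticipate for a Borel subgroup never arises: whenever $-\mathrm{id}\in\bar H$, inflation--restriction with $V^{\langle-\mathrm{id}\rangle}=0$ and $H^1(\{\pm\mathrm{id}\},V)=0$ (the order $2$ is invertible on $V$) forces $H^1(\bar H,V)=0$. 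This one observation disposes of the Borels, $\mathrm{SL}_2(\mathbb{F}_3)$, $\mathrm{GL}_2(\mathbb{F}_3)$ and $C_3\times\{\pm\mathrm{id}\}$ at once; of the remaining groups, $C_3$ and one $S_3$ fix a nonzero vector (the $3$-torsion alternative), and the other $S_3$ has $H^1=0$ because its involution acts by $-1$ on $H^1(C_3,V)\cong\mathbb{F}_3$. Carrying this out would close your route; as written, you have not carried it out.

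For comparison, the paper needs no group cohomology at all; it has one trick your proposal is missing. Since $|\mathrm{AGL}_2(\mathbb{F}_3)|=432$ is divisible by neither $5$ nor $7$, an intransitive action on the nine pre-images must have an orbit of size $1$, $2$ or $3$; sizes $1$ and $2$ yield $K$-magnification easily, so one may assume an orbit $\{R_1,R_2,R_3\}$, i.e.\ an irreducible cubic factor of $\delta_3^P$. The key point is that $R_1+R_2+R_3-P$ is Galois-invariant and killed by $3$, hence equals $\mathcal{O}$ if $E$ has no nontrivial $K$-rational $3$-torsion; this forces the orbit to be the coset $R_1+\langle T\rangle$ with $T=R_2-R_1$, every Galois element then sends $T$ to $\pm T$, so $x(T)\in K$ and V\'elu's formulae give a $3$-isogeny $\phi:E\to E'$ over $K$ with kernel $\{\mathcal{O},\pm T\}$; finally $\phi(R_1)$ is Galois-fixed and $\hat\phi(\phi(R_1))=P$. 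That orbit-sum argument is the single idea that replaces your entire unexecuted case analysis; either it, or the $H^1$ computations indicated above, is what your proposal still needs before it is a proof.
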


Listed in Table~\ref{table1} are curves having a non-torsion rational point which is magnified by tripling a point but 
not $\mathbb{Q}$-magnified. They were found using PARI/GP \cite{11} and in Section~\ref{3} it is explained that such 
examples are rare. 

\section{Proofs of Theorems \ref{1.2} and \ref{1.3}} \label{2}
Let $K$ be a field with $\car{K} \ne 2$. Let $E/K$ be an elliptic curve with Weierstrass coordinate
functions $x$ and $y$. Let $P \in E(K)$ be a non-torsion point. The following observation plays an important role.

\begin{lem} \label{2.1}
Suppose that $E'/K$ is an elliptic curve and $\phi: E' \to E$ is an isogeny defined over $K$ with $\phi(R)=P$. Then 
$K(x(R), y(R))=K(x(R))$. 
\end{lem}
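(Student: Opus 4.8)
The plan is to prove the equality by showing directly that $y(R) \in K(x(R))$; since the containment $K(x(R)) \subseteq K(x(R),y(R))$ is obvious, this suffices. First I would record that, because $P \in E(K)$ has coordinates in $K$ and $\phi$ is defined over $K$, the relation $\phi(R)=P$ forces the coordinates of $R$ to be algebraic over $K$, so $K(x(R),y(R))$ is a finite extension of $K$. Substituting $R$ into the Weierstrass equation of $E'$ shows that $y(R)$ satisfies a monic quadratic over $K(x(R))$, whence $[K(x(R),y(R)):K(x(R))] \le 2$.

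If this degree is $1$ we are done, so I would suppose the quadratic is irreducible. Because $\car K \ne 2$ an irreducible quadratic is separable, so $K(x(R),y(R))/K(x(R))$ is Galois of degree $2$; let $\sigma$ denote its nontrivial automorphism. The crucial step is to identify the conjugate point $\sigma(R):=(\sigma(x(R)),\sigma(y(R)))$. Since $\sigma$ fixes $x(R)$ and carries $y(R)$ to the other root of its quadratic, namely $-y(R)-a_1x(R)-a_3$, one gets $\sigma(R)=-R$ for the group law on $E'$.

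Now I would exploit that every object in sight is defined over $K$. As $E'$ has coefficients in $K$, applying $\sigma$ coordinatewise sends points of $E'$ to points of $E'$ and commutes with the group law; as $\phi$ is defined over $K$, it commutes with $\sigma$ as well. Hence
$$\phi(-R)=\phi(\sigma(R))=\sigma(\phi(R))=\sigma(P)=P,$$
the last equality because $P\in E(K)$ is fixed by $\sigma$. On the other hand $\phi$ is an isogeny, hence a homomorphism, so $\phi(-R)=-\phi(R)=-P$. Combining the two gives $P=-P$, that is $2P=O$, contradicting the standing hypothesis that $P$ is non-torsion. Therefore the quadratic is reducible, $y(R)\in K(x(R))$, and $K(x(R),y(R))=K(x(R))$.

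The argument is short, and the one place demanding genuine care, which I expect to be the main conceptual obstacle, is the middle step: I must justify that the nontrivial Galois conjugate of $R$ over $K(x(R))$ is exactly $-R$, and that the Galois action truly commutes with the $K$-rational isogeny $\phi$. Once these are in hand, everything collapses to the incompatibility of $\phi(\sigma(R))=P$ with $\phi(\sigma(R))=-P$, which holds precisely because $P$ is not $2$-torsion.
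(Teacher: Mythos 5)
Your proposal is correct and follows essentially the same argument as the paper: bound $[K(x(R),y(R)):K(x(R))]$ by $2$, take a nontrivial automorphism $\sigma$, use that the $K$-rational isogeny $\phi$ commutes with $\sigma$ and that $\sigma$ preserves $x(R)$ to force $\sigma(R)=\pm R$, and derive a contradiction with $P$ being non-torsion. The only cosmetic difference is that the paper packages the argument via the kernel element $T=\sigma(R)-R$ of $\phi$, whereas you identify $\sigma(R)=-R$ directly from the roots of the Weierstrass quadratic; the contradiction $2P=\mathcal{O}$ is the same.
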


\begin{proof}
Put $L=K(x(R))$ and $L'=K(x(R), y(R))$. Then $[L':L] \le 2$. Suppose that $[L':L]=2$ and choose 
$\sigma \in \gal(L'/L)$ to be non-trivial. Then $T=\sigma(R)-R$ is in the kernel of $\phi$ since 
$\sigma(\phi(R))-\phi(R)= \mathcal{O}$. But $x(R+T)=x(R)$ so $R+T=\pm R$. Since $P$ is non-torsion it follows that 
$\sigma(R)=R$ and $L'=L$. 
\end{proof}

Let $\psi_m, \theta_m \in K[E]$ be the standard division polynomials (see p. 39 of \cite{9}). Define 
$\delta_m^P \in K[x]$ by $\delta_m^P=\theta_m-x(P)\psi_m^2$. Then $\delta_m^P$ is monic and has degree $m^2$. The 
zeros of $\delta_m^P$ determine the values of $x(R)$ for which $mR=P$.
   
\begin{proof}[Proof of Theorem \ref{1.2}] Assume that $P$ is magnified by doubling $R \in E(L)$. Using
Lemma \ref{2.1}, $L=K(x(R))$. We may choose $R$ so that $[L:K] \le 2$. Suppose that $[L:K]=2$ and choose 
$\sigma \in \gal(L/K)$ to be 
non-trivial. Then $T=\sigma(R)-R$ is a $2$-torsion point since $\sigma(2R)-2R=\mathcal{O}$. Also $T \in E(K)$ since 
$\sigma(T)=-T$. Using this torsion point, we can construct an elliptic curve $E'/K$ and a $2$-isogeny $\phi: E \to E'$ 
with $\ker \phi=\{ \mathcal{O},T \}$ (see p. 95 of \cite{10}). Moreover, both $\phi$ and its dual $\hat{\phi}: E' \to E$ 
are defined over $K$. Put $\phi(R)=Q$. It follows that $\sigma(Q)=\phi(\sigma(R))=\phi(R+T)=\phi(R)$. Hence $Q \in E'(K)$ 
and $\hat{\phi}(Q)=P$.   
\end{proof}

\begin{proof}[Proof of Theorem \ref{1.3}]
We may assume that $\delta_3^P$ has an irreducible factor $f$ of degree $3$. Let $r_1,r_2,r_3$ be the zeros of $f$ and 
let $R_1,R_2,R_3$ be the corresponding solutions to $3R=P$. Then the splitting field $L$ of $f$ is $K(r_1, \sqrt{D})$, 
where $D$ is the discriminant of $f$. Moreover, $\gal(L/K)$ is isomorphic to $A_3$ or $S_3$. Let $\alpha=(1,2,3)$ be the
generator of $\gal(L/K(\sqrt{D}))$ and put $T=\alpha(R_1)-R_1$. Suppose that $E$ does not have a non-trivial $K$-rational 
$3$-torsion point. Any permutation preserves $R_1+R_2+R_3-P$ so it must equal $\mathcal{O}$. Thus 
$\alpha(T)=R_3-R_2=R_2-R_1=T$ and $D$ is not a square in $K$. Choose $\sigma \in \gal(K(\sqrt{D})/K)$ to be non-trivial. 
Since $\sigma(T)+T$ is a $K$-rational $3$-torsion point, $x(T) \in K$. Using V\'{e}lu's formulae \cite{1}, we can 
construct an elliptic curve $E'/K$ and a $3$-isogeny $\phi: E \to E'$ with kernel $\{ \mathcal{O},T,-T \}$. Moreover, 
both $\phi$ and its dual $\hat{\phi}: E' \to E$ are defined over $K$. Now $\phi(R_2)=\phi(R_1+T)$ and 
$\phi(R_3)=\phi(R_1-T)$. Hence $\phi(R_1)$ is fixed by $\gal(L/K)$.           
\end{proof}

\section{Computations} \label{3}
Examples where $\delta_3^P$ factorizes (over $K$) but $P$ is not $K$-magnified are relatively hard to find.
Let $K=\mathbb{Q}$. The first $12$ of Cremona's ``generators'' tables from \cite{3} were considered. 
There are $22,962$ pairs $(E,P)$ such that $\delta_3^P$ factorizes (over $\mathbb{Q}$). 
In all but $14$ of these pairs $P$ is $\mathbb{Q}$-magnified by a $3$-isogeny, which can be constructed using V\'{e}lu's 
formulae \cite{1} and a rational zero of $\psi_3$.

\begin{table}[h]
\caption{Curves having magnified points which are not $\mathbb{Q}$-magnified}
\hbox{
\vtop{\hsize=1.5in
\begin{center}
\begin{tabular}{|c|c|c|}
\hline
$N$ & $C$ & \# \\
\hline
17739 & g & 1 \\
19926 & l & 2 \\
26730 & y & 2 \\
39710 & z & 1 \\
45662 & h & 1 \\
\hline
\end{tabular}
\end{center}
}
\vtop{\hsize=1.7in
\begin{center}
\begin{tabular}{|c|c|c|}
\hline
$N$ & $C$ & \# \\
\hline
47526 & f & 1 \\
49818 & j & 1 \\
57222 & bw & 2 \\
62814 & r & 1 \\
64395 & f & 1 \\
\hline
\end{tabular}
\end{center}
}
\vtop{\hsize=1.5in
\begin{center}
\begin{tabular}{|c|c|c|}
\hline
$N$ & $C$ & \# \\
\hline
70470 & m & 2 \\
92055 & u & 1 \\
113866 & d & 1 \\
119646 & dd & 2 \\ [0.6ex]
\hline
\end{tabular}
\end{center}
}
}
\label{table1}
\end{table}  

The $14$ counterexamples found are listed in Table \ref{table1}. They are given in the same format as Cremona uses, where 
$N$ is the conductor, $C$ is the isogeny class and \# is the number of the curve in the class. Each curve listed has rank 
$1$ and $P$ is taken to be the generator which Cremona gives. Moreover, they each lie in an isogeny class of size two. Let
$Q$ be a generator for the other curve. Denote by $\hat{h}$ the canonical height (see \cite{10}). If $P$ is 
$\mathbb{Q}$-magnified by a $d$-isogeny then it is the image of $mQ+T$, for some non-zero integer $m$ and torsion point 
$T$. This implies $dm^2\hat{h}(Q)=\hat{h}(P)$. But in fact, $\hat{h}(Q)=3\hat{h}(P)$. Thus $P$ is not 
$\mathbb{Q}$-magnified. 

It is worth noting that for all of the pairs $(E,P)$ considered and for all odd primes $l \le 7$, if $\delta_l^P$ 
factorizes (over $\mathbb{Q}$) then $\psi_l$ has a factor of degree at most $(l-1)/2$.

\end{document}